
\documentclass[a4paper,11pt]{article}
\usepackage{amsmath,amsthm,amssymb}

\newcommand{\refpart}[1]{{\it (#1)}}  

\topmargin       0.0in  
\headheight      0.0in  
\headsep         0.0in  
\textheight      8.7in  
\footskip        0.8in  
\oddsidemargin   0.34in  
\evensidemargin  0.34in  
\textwidth       5.6in  
\hyphenpenalty=3000
\tolerance=1000


\renewcommand\ge\geqslant
\renewcommand\geq\geqslant
\renewcommand\le\leqslant
\renewcommand\leq\leqslant

\newtheorem{theorem}{Theorem}[section]

\newtheorem{conjecture}[theorem]{Conjecture}
\newtheorem{example}[theorem]{Example}

\newtheorem{observe}[theorem]{Observation}
\newtheorem{remark}[theorem]{Remark}
\newtheorem{definition}[theorem]{Definition}
\newtheorem{notation}[theorem]{Notation}

\newcommand{\CC}{\mathbb{C}}
\newcommand{\RR}{\mathbb{R}}
\newcommand{\QQ}{\mathbb{Q}}

\newcommand{\PP}{\mathbb{P}}

\newcommand{\cH}{{\mathcal H}}

\newcommand{\OP}{{\mathcal L}}

\newcommand{\w}{w} 
\newcommand{\q}{q}
\newcommand{\s}{s}
\newcommand{\V}{V}

\newcommand{\hpg}[5]{{}_{#1}\mbox{\rm F}_{\!#2}\!
  \left(\left.{#3 \atop #4}\right| #5 \right) }

\newcommand{\frc}[2]{\stackrel{#1}{}\hspace{-3.5pt}/#2}

\title{Differential relations for almost Belyi maps}

\author{
        Raimundas Vidunas\footnote{
        Osaka University, Osaka, Japan.
        E-mail: {\sf rvidunas@gmail.com}.}
        \hspace*{1cm}
        Jiro Sekiguchi\footnote{
        Tokyo University of Agriculture and Technology. 
        E-mail: {\sf sekiguti@cc.tuat.ac.jp }.}
       }
      
\begin{document}

\date{}
\maketitle    

\begin{abstract}
Several kinds of differential relations for polynomial components
of almost Belyi maps are presented. 
Saito's theory of free divisors give particularly interesting 
(yet conjectural) logarithmic action of vector fields. 
The differential relations implied by Kitaev's construction 
of algebraic Painlev\'e VI solutions through pull-back transformations
are used to compute almost Belyi maps for the pull-backs giving all genus 0 and 1 
Painlev\'e VI solutions in the Lisovyy-Tykhyy classification.
\end{abstract}

\section{Introduction}

Importance of Belyi maps was highlighted in the
{\it l'Esquisse d'une programme} by Grothendieck \cite{Groth84}.
Since then, Belyi maps attract increasing attention in algebraic geometry,
number theory, mathematical physics. One elementary application of
Belyi maps is pull-back transformations of hypergeometric differential equations
to Fuchsian equations with a small number of singularities, and 
corresponding transformations of special functions \cite{vHVHeun}, \cite{vhpg}, \cite{vf}.

Recall that a {\em Belyi map} 
is an algebraic covering $\varphi:C\to\PP^1$
that branches only above $\{0,1,\infty\}\subset\PP^1$.
In particular, a genus 0 Belyi map (with $C\cong\PP^1$)
is defined by a rational function 
$\varphi(x)\in\CC(x)$ such that all branching points $\{x:\varphi'(x)=0\}$ 
lie in the fibers $\varphi(x)\in \{0,1,\infty\}$.

Almost Belyi maps were assertively introduced by Kitaev \cite{Kit1}, \cite{Kit2} 
in the context of algebraic solutions of the Painlev\'e VI equation.
 \begin{definition} \rm
An {\em almost Belyi map} (or an {\em AB-map}, for shorthand)
is an algebraic covering $\varphi:C\to\PP^1$
that has exactly one simple branching point outside the fibers  
$\{0,1,\infty\}\subset\PP^1$. (Recall that {\em simple} branching points have 
the branching order 2.)
\end{definition}
Kitaev constructed algebraic Painlev\'e VI functions 
using the Jimbo-Miwa correspondence \cite{JM81} 
to isomonodromic $2\times 2$ Fuchsian systems with 4 singularities. 
The corresponding Fuchsian systems are generated by pull-backs 
of the Gauss-Euler hypergeometric equation with respect to AB-maps.
In the context of Picard-Fuchs equations, the same pull-back method 
with AB-maps was employed by Doran \cite{Doran}, Movasatti, Reiter \cite{MR}.

Recently \cite{kms15}, algebraic Painlev\'e VI solutions and AB-maps found application 
in Saito's singularity theory \cite{Saito}, \cite{Se1}
and Dubrovin's 
theory of Frobenius manifolds \cite{Dubr}.
This direction  motivates computation of new examples of AB-maps.
In particular, a list of AB-maps giving pull-backs to all cases of 
algebraic Painlev\'e VI solutions in the Lisovyy-Tykhyy classification \cite{LiTy}
(up to Schlessinger gauge transformations) is desirable.

The problem of computing AB-maps and the mentioned applications
give an interesting set of differential relations for AB-maps 
and their polynomial components. In particular, 
\begin{itemize}
\item Usefulness of differentiation in computing Belyi maps was noticed
by several authors \cite[\S 2.5]{sv}. Further, the very fact of implied 
pull-back transformations of Fuchsian equations
gives additional differential and algebraic restrictions.
The same techniques apply to computation of AB-maps,
as we demonstrate in \S \ref{sec:compute}.
\item Kitaev's basic construction entails differentiation 
with respect to the ``isomondromic" {\em parameter} 
(rather than with respect to the independent variable),
leading to differential relations between the coefficients of an AB-map. 
The straightforward case of Kitaev's {\em RS-transformations}
is summarized in Theorem \ref{th:kitaev}.
\item Saito's construction of {\em free divisors} gives action of vector fields
that relates differentiation both with respect to the independent variable 
and the  ``iso-mondromic" {\em parameter}. Remarkably, we observe 
existence of vector fields that are {\em logarithmic} along each hypersurface
defined by the polynomial components of an AB-map,
leading us to Conjecture \ref{th:conj}.
\end{itemize}
Analysis of these differential relations 
(in \S \ref{sec:compute}, \S \ref{sec:relap6}, \S \ref{sec:freediv}, respectively)
is the main contribution of this article. 
Additionally, Section \ref{sec:algp6} presents computational results 
of AB-maps for all genus 0 and 1 cases of the Lisovyy-Tykhyy classification \cite{LiTy} 
of algebraic Painlev\'e VI solutions. 

\section{Preliminaries}

Here we introduce 
application of Belyi maps and AB-maps 
to pull-back transformations between Fuchsian equations; 
basic methods for computing these maps and differential relations they employ.


\subsection{Nomenclature for AB-maps}
\label{sec:nomenc}

This paper studies AB-maps of genus 0.
We are thus looking at rational functions $\varphi(x)\in\CC(x)$
such that all branching points $\{x:\varphi'(x)=0\}$ {\em except one}
lie in the fibers $\varphi(x)\in \{0,1,\infty\}$.
The extra branching point has the branching order 2
(thus $\varphi''\neq 0$ at that branching point if it is not $\infty$).

Important distinctions between Belyi maps and AB-maps are:
\begin{enumerate}
\item Belyi maps form discrete (0-dimensional) Hurwitz spaces.
AB-maps form 1-dimensional Hurwitz spaces; 
that is, there are 1-dimensional families of them 
parametrized by algebraic curves.
\item By Hurwitz theorem, a Belyi map $\varphi:\PP^1\to\PP^1$ 
of genus 0, degree $d$
has exactly $d+2$ distinct points in the 3 fibers $\varphi(x)\in \{0,1,\infty\}$. 
An AB-map $\varphi:\PP^1\to\PP^1$ of genus 0, degree $d$ 
has exactly $d+3$ points in the 3 fibers.
\end{enumerate}
\begin{example} \rm \label{eq:d6example}
An example of a AB-map of degree 6 is 
\begin{equation} \label{eq:ab6}
\varphi_1(x)=\frac{(\w\,x^3+15x^2+20x+8)^2}{64\,(x+1)^5}.
\end{equation}
The parameter $\w$ appears only once.
We can compute:
\begin{align*}
\varphi_1(x)-1&=
\frac{x^3\big(\w^2x^3+2\,(15\w-32)x^2+5\,(8\w-19)x+16\w-40\big)}{64\,(x+1)^5},\\
\varphi_1'(x) &=\frac{x^2\,(\w\,x^3+15x^2+20x+8)\,(\w\,x+6\w-15)}{64\,(x+1)^5}.
\end{align*}
The root $\,x=\q_1=-6+15/\w\,$ of $(\w\,x+6\w-15)$ is the only branching point 
outside the fibers $\varphi(x)\in \{0,1,\infty\}$.
\end{example} 

\begin{notation} \rm
Let $\varphi\in\CC(x)$ be a rational function of degree $d$.
The {\em branching pattern} in a fiber $\varphi=C$ is given by a partition of $d$.
We choose the multiplicative notation $1^{n_1}2^{n_2}\ldots$ for a branching pattern,
meaning $n_1$ non-branching points, $n_2$ branching points of order 2, etc.
For example,  we write the branching pattern of the  fiber $\varphi_1=1$ 
of the AB-map in (\ref{eq:ab6})  as $1^33$ rather than $1+1+1+3$. 
The partition fact is expressed by $\sum kn_k=d$. 

The collection $[P_1/P_2/P_3]$ of the branching patterns $P_1$, $P_2$, $P_3$
in the fibers $\varphi=0$, $\varphi=1$, $\varphi=\infty$ is called at 
the {\em passport} of $\varphi$. For example, the passport of $\varphi_1$ 
in (\ref{eq:ab6}) is $[2^3/3\,1^3/5\,1]$, keeping in mind the point $x=\infty$
in the fiber $\varphi=\infty$. The order of branching patterns in the passport
is not significant to us, as permutation of the 3 fibers is realized by 
the fractional-linear expressions $\varphi/(\varphi-1)$, $1-\varphi$, $1/\varphi$,
$1/(1-\varphi)$, $(\varphi-1)/\varphi$.
\end{notation}

\subsection{Pull-backs of Fuchsian equations}

One application of Belyi maps is pull-back transformations of  the hypergeometric 
equation
\begin{align} \label{eq:hpge}
\frac{d^2y(z)}{dz^2}+
\left(\frac{\,c\,}{z}+\frac{a+b-c+1}{z-1}\right) \frac{dy(z)}{dz}+\frac{a\,b}{z\,(z-1)}\,y(z)=0.
\end{align} 
to Fuchsian equations with a few singularities (e.g., Heun, other hypergeometric equations).
The pull-back transformations have the form 
\begin{equation} \label{eq:pback}
z\longmapsto\varphi(x), \qquad
y(z)\longmapsto Y(x)=\theta(x)\,y(\varphi(x)),
\end{equation}
where $\varphi(x)$ is a rational function, 
and $\theta(x)$ is a Liouvillian (e.g., power) function.
The rational function $\varphi(x)$ is typically a special Belyi map.
Applicable Belyi maps are characterized using the following definition 
\cite[Definition 1.2]{vHVHeun}.
\begin{definition} \label{df:klmnregular} \rm
Given positive integers $k,\ell,m,n$, a Belyi map $\varphi:\PP_x^1\to\PP_z^1$
is called {\em $(k,\ell,m)$-minus-$n$ regular} if, 
with exactly $n$ exceptions in total, 
all points above $z=1$ have branching order $k$, 
all points above $z=0$ have branching order $\ell$, and
all points above $z=\infty$ have branching order $m$. 
\end{definition} 

The singularities and the local exponents of the pulled-back Fuchsian equation
are straightforwardly determined from the pull-back (\ref{eq:pback}) 
and Riemann's $P$-symbol
\[
P\left\{\begin{array}{ccc} 0 & 1 & \infty
\\ 0 & 0 & a \\ 1-c & c-a-b & b
\end{array} \;z\; \right\}
\]  
of hypergeometic equation (\ref{eq:hpge}). 
For the pulled-back Fuchsian equation to have only $n$ singularities,
we usually need the local exponent differences $c-a-b$, $1-c$, $b-a$
to be inverse integers $\pm1/k,\pm1/\ell,\pm1/m$,
and the covering $z=\varphi(x)$ to be a $(k,\ell,m)$-minus-$n$ regular Belyi map.
The canonical Fuchsian equations with $n\le 4$ are hypergeometric and Heun equations.

We extend Definition \ref{df:klmnregular} to AB-maps.
\begin{definition} \label{df:abklmn} \rm
Given positive integers $k,\ell,m,n$, an AB-map $\varphi:\PP_x^1\to\PP_z^1$
is called {\em $(k,\ell,m)$-minus-$n$ regular} if, 
with exactly $n$ exceptions in total, 
all points above $z=1$ have branching order $k$, 
all points above $z=0$ have branching order $\ell$, and
all points above $z=\infty$ have branching order $m$. 
\end{definition} 

\begin{example} \rm
The AB-map $\varphi_1(x)$ in Example \ref{eq:d6example} 
is $(3,2,5)$-minus-4 regular. The 4 exceptional points are $x=\infty$ and 
the 3 simple roots of $\varphi_1(x)-1$.
\end{example}

\begin{remark} \rm \label{rm:vhoeij}
Recently, van Hoeij and Kunwar classified $(2,3,\infty)$-minus-5 regular AB-maps 
in \cite{vHK5}. Here $\infty$ means that all points in the third fiber are counted 
as exceptional (towards 5). These maps have degree $\le 12$.
A portion of the AB-maps $N_1,\ldots,N_{68}$ in \cite[Table 1]{vHK5} 
are applicable as $(2,3,m)$-minus-4 maps to the Fuchsian equations considered here;
see the fifth column in Table \ref{tb:abmaps}. 
\end{remark}

Pull-back transformations with respect to $(k,\ell,m)$-minus-$n$ regular
AB-maps can transform hypergeometric equation (\ref{eq:hpge})
with the local exponent differences $1/k,1/\ell,1/m$ to Fuchsian equations 
with an apparent singularity and $n$ other singularities.
The apparent singular point will have the local exponents $0, 2$,
rather than $0,1$ for regular points. Since AB-maps are parametrized
by algebraic curves, a generic pull-back transformation will give
isomonodromic families of Fuchsian equations with these singularities.

An important case is Fuchsian ordinary differential equations 
with an apparent singularity and $n=4$ other singular points.
Isomonodromic families of these equations are parametrized
by solutions of the Painlev\'e VI equation
\begin{align}  \label{eq:pvi}
\frac{d^2q}{dt^2}=\,& {1\over2} \! \left({1\over q}+{1\over q-1}+{1\over q-t}\right) \!
\left(\frac{dq}{dt}\right)^{\!2} \!
-\left({1\over t}+{1\over t-1}+{1\over q-t}\right)\frac{dq}{dt} \nonumber \\
&+{q(q-1)(q-t)\over t^2(t-1)^2}\left(\alpha+\beta {t\over q^2}+
\gamma{t-1\over(q-1)^2}+\delta {t(t-1)\over(q-t)^2}\right).
\end{align} 
By the Jimbo-Miwa correspondence \cite{JM81}, a solution $q(t)$ parametrizes 
isomonodromic $2\times 2$ Fuchsian systems $dY/dx=A(x,t)Y$
with the singularities  $x=0$, $x=1$, $x=t$, $x=\infty$ 
and the local monodromy differences
$\theta_0,\theta_1,\theta_t,\theta_\infty$ such that 
\begin{equation} \label{eq:pvipara}
\alpha=\frac{(\theta_\infty-1)^2}2,\quad
\beta=-\frac{{\theta}_0^2}2,\quad\gamma=\frac{{\theta}_1^2}2,
\quad\delta=\frac{1-{\theta}_t^2}2.
\end{equation}
An equivalent isomonodromic family of ODEs has 1 apparent 
and 4 other singularities. To write down the parametric Fuchsian ODE explicitly,
one can use a specification of the Painlev\'e VI equation
in terms of the Hamiltonian system
\begin{equation} \label{eq:hamilt}
\frac{dq}{dt}=\frac{\partial \cH_0}{\partial p}, \qquad
\frac{dp}{dt}=-\frac{\partial \cH_0}{\partial q}.
\end{equation}
with
\begin{align}
\cH_0=& \, \frac{q\,(q-1)(q-t)}{t(t-1)} \! \left( p^2-\left(\frac{\theta_0}{q}
+\frac{\theta_1}{q-1}+\frac{\theta_t-1}{q-t}\right) \! p+\frac{\Theta}{q(q-1)}\right),
\nonumber \\
\Theta= &\, \frac{(\theta_0+\theta_1+\theta_t-\theta_\infty)(\theta_0+\theta_1+\theta_t+\theta_\infty-2)}{4}.
\end{align} 
The Painlev\'e VI equation is obtained by elimininating $p$.
The corresponding Fuchsian ODE is
\begin{align} \label{eq:ode4p1}
 \frac{d^2Y(x)}{dx^2} \,+ &
\left(\frac{1-\theta_0}{x}+\frac{1-\theta_1}{x-1}+\frac{1-\theta_t}{x-t}-\frac{1}{x-q}\right) 
\frac{dY(x)}{dx} + W_1\,Y(x)
  =0 
\end{align}
with
\begin{align*} 
W_1= \frac{\Theta}{x(x-1)} +
 \frac{q\,(q-1)\,p}{x(x-1)(x-q)}-\frac{t\,(t-1)\cH_0}{x(x-1)(x-t)};
\end{align*}
see \cite[pg.~169--173]{IKSY}  with $n=1$.
\begin{notation} \rm
Let $P_{VI}(\theta_0,\theta_1,\theta_t,\theta_\infty)$ 
denote the Painlev\'e VI equation (\ref{eq:pvipara}) with the parameters (\ref{eq:pvipara}).
Similarly, let $E(1-c,c-a-b,b-a)$ denote the hypergeometric equation (\ref{eq:hpge})
by the local exponent differences.
\end{notation}

\subsection{Computational methods}
\label{sec:compute}


As considered in \cite[\S 5.2]{vHVHeun},
a $(k,\ell,m)$-minus-$n$ regular Belyi map has the forms
\begin{align} \label{eq:phi1phi}
\varphi(x)= &\, r_1\,\frac{P^\ell \, F}{Q^m G} \\
\label{eq:phi2phi}
= & \, 1+r_2\,\frac{R^k \, H}{Q^m G},
\end{align}
where $P,Q,R$ are monic polynomials without multiple roots;
$F,G,H$ are monic polynomials with $n$ or $n-1$ distinct roots
in total; and $r_1,r_2$ are constants.
We refer to the polynomials $P,Q,R,F,G,H$ as {\em polynomial components}
of $\varphi$.

The total number of distinct roots of 
the 6 polynomial components 
(including $x=\infty$ if one of the 3 terms in the polynomial
identity is of lower degree)  equals $\deg\varphi+2$, by \S\ref{sec:nomenc}\refpart{ii}.
The two expressions (\ref{eq:phi1phi})--(\ref{eq:phi2phi}) 
are equivalent to the polynomial identity 
\begin{equation} \label{eq:abc}
r_1P^\ell\,F=Q^{m}\,G+r_2R^k\,H.
\end{equation}
A $(k,\ell,m)$-minus-$n$ regular {\em AB-map} has the same shape,
but the total number of roots in the terms (including $x=\infty$) 
equals $\deg\varphi+3$ rather than $\deg\varphi+2$.
The polynomial components and the constants $r_1,r_2$
may then depend on a continuos parameter.

Polynomial identity (\ref{eq:abc}) gives 
a system of necessary polynomial equations   
for the undetermined coefficients of $P,Q,R$  and perhaps of $F,G,H$. 
If the degree of the target Belyi map significantly exceeds 10,
the algebraic system is too complicated, with too many degenerate 
({\em  parasytic}) solutions to be solved by Gr\"obner basis techniques efficiently.
Simpler algebraic systems are obtained by considering the logarithmic derivatives
\begin{align} \label{eq:logdif1}
\frac{\varphi'(x)}{\varphi(x)} 
= &\, \ell\,\frac{P'}{P}+\frac{F'}{F}-m\,\frac{Q'}{Q}-\frac{G'}{G}, \\
\frac{\varphi'(x)}{\varphi(x)-1}  
= & \, k\,\frac{R'}{R}+\frac{H'}{H}-m\,\frac{Q'}{Q}-\frac{G'}{G}.
\end{align}
The roots of $\varphi'(x)/\varphi(x)$ are the branching points outside
the fibers $\varphi(x)\in\{0,\infty\}$, with the multiplicity reduced by 1.
This consideration gives the alternative expressions
\begin{equation}  \label{eq:logdif9}
\frac{\varphi'(x)}{\varphi(x)} =h_1 \frac{R^{k-1}\,H}{P\,Q\,S}, \qquad
\frac{\varphi'(x)}{\varphi(x)-1} =h_2 \frac{P^{\ell-1}\,F}{Q\,R\,S}.
\end{equation} 
If $\varphi(x)$ is supposed to be a Belyi map, $S$ here 
equals the product of irreducible monic factors of $F\,G\,H$, each to the power 1.
If $\varphi(x)$ is an AB-map, $S$ equals this product divided by $x-q$, 
where $q$ is the (undetermined) extra branching point.
If $x=\infty$ is in the $\varphi=\infty$ fiber, then $h_1,h_2$ 
are equal to the branching order at $x=\infty$; otherwise they are (undetermend) constants.
The obtained algebraic system for the coefficients is typically over-determined,
with fewer degenerate solutions. 
According to \cite{Couv99}, \cite{sv}, these differential relations for Belyi maps
were noticed by Fricke, Atkin, Swinnerton-Dyer .

Additional algebraic equations are obtained by considering implied pull-back
transformations of second order Fuchsian equations.
In particular \cite[Lemma 5.1]{vHVHeun}, the pull-back
\begin{equation} \label{eq:qpback}
z\mapsto \varphi(x),  \qquad
y(z)\mapsto Y(x)=\left(Q^{m} G\right)^{a} \, y(\varphi(x))
\end{equation} 
transforms the hypergeometric equation (\ref{eq:hpge}) with
\[ 
a=\frac12\left(1-\frac1k-\frac1{\ell}-\frac1m\right), \quad
b=\frac12\left(1-\frac1k-\frac1{\ell}+\frac1m\right), \quad
c=1-\frac{1}{\ell}
\] 
to the Fuchsian equation
\begin{align}   \label{eq:qpback9}
& \frac{d^2Y(x)}{dx^2}+
\left(\frac{S'}{S}-\frac{F'}{\ell\,F}-\frac{G'}{m\,G}-\frac{H'}{k\,H}\right) \frac{dY(x)}{dx}
+W_2Y(x)=0
\end{align}
with 
\begin{align}  
W_2 = & \, a \left[ b \left( \frac{ h_1h_2\,P^{\ell-2}R^{k-2}\,F\,H}{Q^2 S^2}
- \frac{m^2Q'{}^2}{Q^2} -\frac{G'{}^2}{G^2} \right)+\frac{mQ''}{Q}+\frac{G''}{G}
+\qquad \right. \nonumber \\
& \left.\quad +\left(\frac1k+\frac1\ell\right)\!\frac{mQ'G'}{Q\,G}
+\left(\frac{m Q'}{Q}+\frac{G'}{G}\right) \!
\left(\frac{S'}{S}-\frac{F'}{\ell\,F}-\frac{G'}{G}-\frac{H'}{k\,H}\right)
\right]. \nonumber
\end{align}
In the context of pull-back transformations to isomonodromic Fuchsian system
with one apparent singularity and 4 other singularities, this equation can be compared
with (\ref{eq:ode4p1}). 

\subsection{Relation to algebraic Painlev\'e VI solutions}
\label{sec:relap6}


Kitaev \cite{Kit1}, \cite{Kit2} initiated study of AB-maps
with the purpose of constructing algebraic Painlev\'e VI solutions.
The relevant AB maps are $(k,\ell,m)$-minus-4 
regular, 
as they induce pull-back transformations of 
to isomonodromic $2\times 2$ Fuchsian systems 
with 4 singularities (or the corresponding ODEs) 
by the Jimbo-Miwa correspondence \cite{JM81} 
of these systems to Painlev\'e VI solutions.
Kitaev's basic construction  gives the following result.
\begin{theorem} \label{th:kitaev}
Let $\varphi(X)$ denote a $(k,\ell,m)$-minus-$4$ regular AB-map. 
Suppose that its irregular branching points are $X=0$, $X=1$, $X=\infty$, $X=t$.
Let $X=q$ denote the extra branching point of order $2$. 
Then $q(t)$ is an algebraic Painlev\'e VI solution with the parameters 
$\theta_j=a_j/K_j$ for $j\in\{0,1,t\}$, and $\theta_\infty=1-a_\infty/K_\infty$.
Here $K_j,K_\infty\in\{k,\ell,m\}$ depending on the fiber of each 
of the $4$ singularities, and $a_j,a_\infty$ are the branching orders 
at them. 
\end{theorem}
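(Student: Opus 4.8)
The plan is to realize Theorem \ref{th:kitaev} as a direct instance of the pull-back formalism developed in \S\ref{sec:compute}, identifying the pulled-back Fuchsian equation with the isomonodromic ODE (\ref{eq:ode4p1}). First I would set up the pull-back $z\mapsto\varphi(X)$ applied to the hypergeometric equation $E(1/k,1/\ell,1/m)$ with a power prefactor as in (\ref{eq:qpback}), so that by \cite[Lemma 5.1]{vHVHeun} (quoted here as (\ref{eq:qpback9})) the result is a second-order Fuchsian equation on $\PP_X^1$. Because $\varphi$ is $(k,\ell,m)$-minus-$4$ regular, all but four points in the fibers over $\{0,1,\infty\}$ contribute trivial (integer) local exponent differences $1/K_j\cdot K_j$ and hence are non-singular in the pull-back; the four exceptional points $X=0,1,\infty,t$ acquire local exponent differences $a_j/K_j$, which I would record using Riemann's $P$-symbol and the transformation rule for exponents under a covering of local branching order $e$ (multiply the exponent difference by $e$). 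This yields exactly four ``true'' singularities with the exponent differences claimed for $\theta_0,\theta_1,\theta_t,\theta_\infty$, the subtraction $1-a_\infty/K_\infty$ arising from the standard normalization of the exponent at infinity in Riemann's scheme together with the prefactor's contribution there.

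Next I would locate the apparent singularity. The extra simple branching point $X=q$ of $\varphi$ lies outside the fibers over $\{0,1,\infty\}$, so $\varphi(X)-\varphi(q)$ has a double zero at $X=q$ while $\varphi'$ has a simple zero there; tracing this through the pull-back of a generic solution $y(\varphi(X))$ shows $X=q$ becomes a regular singular point with local exponents $\{0,2\}$, i.e.\ an apparent singularity — precisely the extra singularity appearing in (\ref{eq:ode4p1}) with the term $-1/(x-q)$ in the coefficient of $Y'$. Thus the pulled-back equation has the same singularity structure as (\ref{eq:ode4p1}): four Fuchsian points with the prescribed exponent differences plus one apparent singularity at $q$. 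Matching the accessory parameter (the residue-type datum at $x=q$, which in (\ref{eq:ode4p1}) is the Hamiltonian variable $p$) identifies the pulled-back equation with the member of the isomonodromic family (\ref{eq:ode4p1}) determined by $(q,p)$.

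The decisive step is the isomonodromy argument. Here I would invoke that AB-maps move in a one-dimensional Hurwitz family (\S\ref{sec:nomenc}\refpart{i}): as the AB-map $\varphi$ varies along its Hurwitz curve, the cross-ratio $t$ of the four irregular branching points varies, and the monodromy of the pulled-back equation is the pull-back of the fixed hypergeometric monodromy along a continuously deforming covering, hence is constant (conjugation by a continuously varying framing). Therefore the family of equations (\ref{eq:ode4p1}) obtained as $t$ ranges over the Hurwitz curve is an isomonodromic deformation, which by the Jimbo--Miwa correspondence \cite{JM81} and (\ref{eq:pvipara})--(\ref{eq:hamilt}) forces $q=q(t)$ to satisfy $P_{VI}(\theta_0,\theta_1,\theta_t,\theta_\infty)$ with the stated parameters. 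Algebraicity of $q(t)$ is automatic: $q$ is a rational (indeed polynomial-component) function of the coefficients of $\varphi$, which themselves satisfy the algebraic equations (\ref{eq:abc}) parametrizing the Hurwitz curve, so $(t,q)$ lies on an algebraic curve.

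I expect the main obstacle to be the bookkeeping at $X=\infty$ and, more generally, pinning down exactly which branching order $K_j$ and branching index $a_j$ enters at each of the four singular points, since this depends on whether a given exceptional point sits in the $\varphi=0$, $\varphi=1$, or $\varphi=\infty$ fiber and whether $X=\infty$ itself is one of the four. The cleanest route is to verify the exponent-difference formula fiber by fiber using (\ref{eq:qpback9}) and the explicit $P$-symbol, reducing the $\theta_\infty=1-a_\infty/K_\infty$ case to the others via one of the fractional-linear involutions $\varphi\mapsto 1/\varphi$, $1-\varphi$, etc.; the $+1$ shift then transparently reflects the choice of which solution branch of the hypergeometric equation is pulled back. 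A secondary technical point is checking that the apparent singularity at $q$ is genuinely apparent (no logarithmic terms) for generic parameters, which follows because the pull-back of a local basis of solutions at a non-branch point of the target is again single-valued, the double zero of $\varphi-\varphi(q)$ only producing the exponent $2$ rather than a logarithm.
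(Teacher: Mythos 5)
Your proposal is correct and follows essentially the same route as the paper: the paper's proof is just a two-line citation of the Jimbo--Miwa correspondence together with the explicit pull-back from $E(1/\ell,1/k,1/m)$, i.e.\ the case $\varepsilon=1$ of Kitaev's Theorem 2.1 in \cite{Kit1}, which is precisely the argument you spell out (exponent bookkeeping via (\ref{eq:qpback})--(\ref{eq:qpback9}), the apparent singularity at $X=q$, isomonodromy along the one-dimensional Hurwitz family, and algebraicity from the Hurwitz curve).
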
 
\begin{proof}
The Jimbo-Miwa correspondence \cite{JM81} and 
explicit consideration of a pull-back from $E(1/\ell,1/k,1/m)$.
This is the particular case $\varepsilon=1$ of \cite[Theorem 2.1]{Kit1}.
\end{proof}
This theorem gives differential relations between {\em coefficients} of AB-maps.
The relation between $t$ and $q$ is algebraic
because the Hurwitz space is one-dimensional.
\begin{example} \rm \label{ex:lt08}
Consider the polynomials
\begin{align}
P= &\; x^4+4\w x^2-6\w x+\w^2, \nonumber \\
R= &\; 2x^6+12\w x^4-18\w x^3+15\w^2x^2-36\w^2x-\w^2(2\w-27), \nonumber \\
G_1 = &\; x-1, \\ 
G_2 = &\; 4x^3+\w x^2+18\w x+\w(4\w-27). \nonumber
\end{align}
Reminiscent to (\ref{eq:abc}), we have a polynomial identity
$
4 P^3 = R^2+r_0 G_1^2\,G_2
$ 
with \mbox{$r_0=27\w^3$.}
It defines a $(2,3,7)$-minus-4 regular AB-map
\begin{equation}
\varphi_2(x)=\frac{4P^3}{r_0 G_1^2\,G_2}
=1+\frac{R^2}{r_0 G_1^2\,G_2}
\end{equation}
of degree 12,
with the branching pattern $[2^6/3^4/7\,2\,1^3]$.
The extra branching point is $x=\q_2=(9-2\w)/7$. 
To obtain an algebraic Paineve VI solution of $P_{VI}(1/7,1/7,2/7,6/7)$ 
by Theorem \ref{th:kitaev}, we first reparametrize 
\begin{equation} \label{eq:lt08subs}
\w\mapsto -\frac{(\s^2+3)^3}{(\s-1)^2(\s+1)^2}
\end{equation} 
so that $G_2$ has rational roots:
\begin{align*}
x_1\!=\frac{(\s^2\!+3)(\s^2\!+15)}{4\,(\s-1)\,(\s+1)}, \ 
x_2\!=\frac{(\s^2\!+3)(2\s^2\!+3\s+3)}{(1-\s)\,(\s+1)^2}, \ 
x_3\!= \frac{(\s^2\!+3)(2\s^2\!-3\s+3)}{(\s-1)^2\,(\s+1)}.
\end{align*}
We move these points to the locations $X_1=\infty$, $X_2=0$, $X_3=1$
by the M\"obius $x$-transformation
\begin{equation}
x\mapsto \left(\frac{\s^2+3}{\s^2-1} \right) \!
\frac{4\s^3(\s^2+15)X-(\s-3)^3(2\s^2+3\s+3)}{16\s^3X+(\s+1)(\s-3)^3}.
\end{equation}
The root of $G_1$ is transformed to $X=t_2$ with
\begin{equation}
t_2=\frac{(\s-3)^3(\s^2+\s+2)^2}{2\s^3(\s^2+7)^2},
\end{equation}
and the transformed location 
of the extra branching point is
\begin{equation}
\q_2=\frac{(\s+1)(3-\s)(\s^2+\s+2)}{2\s(\s^2+7)}.
\end{equation}
This parametrizes an algebraic solution $\q_2(t_2)$ of  $P_{VI}(1/7,1/7,2/7,6/7)$.
The fractional-linear transformation $t_2\,(\q_2-1)/(\q_2-t_2)$ 
permutes the singularities $0\leftrightarrow1$, $t\leftrightarrow\infty$,
and gives the {\em Kleinian solution} of Boalch \cite{Boa1}.
Kitaev derived this solution by the pull-back construction \cite[\S 3.4.3]{Kit1},
also followed in \cite[\S 5]{VK2}.
\end{example}

\begin{example} \rm \label{ex:lt33}
Consider the polynomials
\begin{align}
P= &\; x^3+(\w-6)x^2+24x-48, \nonumber \\
R= &\; x^5+2(\w-6)x^4+(\w^2-12\w+72)x^3+36(\w-8)x^2-72(\w-9)x-864, \quad \nonumber \\
F = &\; x+\w-6, \\ 
G = &\; \w x^3+(\w^2-6\w-3)x^2+8(3\w+1)x-16(4\w+3). \nonumber
\end{align}
We have a polynomial identity
$
P^3\,F = R^2+1728G
$. 
It defines a $(2,3,7)$-minus-4 regular AB-map $\varphi_3(x)$
of degree 10, with the branching pattern $[2^5/3^3\,1/7\,1^3]$.
The extra branching point is $x=\q_3=-4(\w^2-6\w-6)/(7\w)$. 
The curve $G(x,\w)=0$ defines a genus 0 curve;
a parametrization of it gives a substitution 
after which the polynomial $G(x)$ has a rational root:
\begin{equation}
\w\mapsto \frac{(s+2)(s^2+2s+9)}{(s-1)^2}.
\end{equation} 
Complete factorization of $G$ is achieved 
on the genus 1 curve $y^2=s\,(s^2+s+7)$.
Here are the roots of $G$:
\begin{align*}
x_1\!=\frac{(1-s)(s+3)}{s+2}, \quad 
x_2\!=\frac{4(2s^2+2s+5+3y)}{(s-1)\,(3-y)}, \quad
x_3\!=\frac{4(2s^2+2s+5-3y)}{(s-1)\,(3+y)}.
\end{align*}
The three roots are mapped to $X_1=\infty$, $X_2=0$, $X_3=1$
by the M\"obius $x$-transformation
\begin{equation*}
x\mapsto 
\frac{4(1-s)\,\big( 2y(s+3)(s^2\!+s+7)(2X\!-\!1)
-3s^4\!-34s^3\!-114s^2\!-252s-245\big)}
{8y(s+2)(s^2+s+7)(2X\!-\!1)-s^6\!-2s^5\!+9s^4\!+64s^3\!+221s^2\!+210s+147}.
\end{equation*}
The root of $F$ is transformed to $X=t_3$ with
\begin{equation}
t_3=\frac12+\frac{s^9-84s^6-378s^5-1512s^4-5208s^3-7236s^2-8127s-784}
{432\,(s+1)^2\,(s^2+s+7)\,y},
\end{equation}
and the transformed location of the extra branching point is
\begin{equation}
\q_3=\frac12-\frac{s\,(s^4+2s^3+12s^2+20s+73)}{12\,(s+1)\,(s+2)\,y}.
\end{equation}
This parametrizes an algebraic solution $\q_3(t_3)$ 
of  $P_{VI}(1/7,1/7,1/3,6/7)$, of genus 1.
An equivalent solution $t_3\,(\q_3-1)/(\q_3-t_3)$ of $P_{VI}(1/7,1/7,1/7,2/3)$
was first found by Kitaev \cite[\S 3]{Kit2} by the pull-back method.
\end{example}

More generally, Kitaev's method \cite{Kit1}
allows further {\em Schlessinger gauge transformations}
to obtain multiple algebraic Painlev\'e VI solutions from the same pull-back transformation.
These transformations are matrix analogues of (\ref{eq:pback}) with $\varphi(x)=x$.
They shift local exponent differences 
(including $\theta_0,\theta_1,\theta_t,\theta_\infty$) by integers;
the total shift sum must be even. The whole construction is called {\em RS-transformations},
where R stands for a Rational pull-back, and S stands for a Schlessinger 
transformation.

\begin{example} \rm \label{ex:kitaevshift}
Examples \ref{ex:lt08}, \ref{ex:lt33} implicitly employ pull-backs of 
the hypergeometric equation $E(1/2,1/3,1/7)$
to isomonodromic Fuchsian equations with 4 
singularities at the roots of $V_1,V_2$ (or $U,V$, respectively) 
and an apparent singularity at $x=\q_2$ (or $x=\q_3$).
This lead to algebraic solutions of $P_{VI}(1/7,1/7,2/7,6/7)$ 
and $P_{VI}(1/7,1/7,1/3,6/7)$ by Theorem \ref{th:kitaev}.
The same pull-back transformations can be applied to 
the hypergeometric equations $E(1/2,1/3,2/7)$ and $E(1/2,1/3,3/7)$,
as suggested by Kitaev \cite{Kit1}, \cite{Kit2}.
The pull-backs of $E(1/2,1/3,2/7)$ have the same $4+1$ singularities,
plus a new apparent singularity at $x=\infty$.
Schle\-ssinger transformations neutralizing this singularity
give algebraic solutions of $P_{VI}(2/7,2/7,4/7,2/7)$, $P_{VI}(2/7,2/7,1/3,2/7)$,
as demonstrated in \cite{VK2}. 
Similarly, the pull-backs of $E(1/2,1/3,3/7)$ have the same $4+1$ singularities,
plus a new singularity at $x=\infty$ with the monodromy difference 3.
Neutralizing Schle\-ssinger transformations lead to algebraic solutions
of $P_{VI}(3/7,3/7,6/7,4/7)$ and $P_{VI}(3/7,3/7,1/3,4/7)$,
as shown in \cite{VK2}.
\end{example}
It is worth recalling here the {\em Okamoto} (also called {\em B\"acklund}) 
{\em transformations} \cite{Oka}
that convert $q(t)$ to rational functions  of $q(t)$, $dq/dt$ and $t$. 
The basic transformation acts on the parameters of the Painlev\'e VI equation as follows:
\begin{align} \label{eq:bokamoto}
(\theta_0,\theta_1,\theta_t,\theta_\infty) \mapsto 
(\Theta-\theta_0,\Theta-\theta_1,\Theta-\theta_t,\Theta-\theta_\infty),
\end{align}
with $\Theta=(\theta_0+\theta_1+\theta_t+\theta_\infty)/2$.
Special cases are transformations that shift $(\theta_0,\theta_1,\theta_t,\theta_\infty)$ 
by integer vectors, with the total shift even. 
They can be realized by Schlessinger gauge transformations
of the Fuchsian equations.

Note that $P_{VI}(\pm\theta_0,\pm\theta_1,\pm\theta_t,1\pm\vartheta_\infty)$
is the same Painlev\'e VI equation, hence (\ref{eq:bokamoto}) defines 16 ``neighbouring"
Painlev\'e VI equations by Okamoto 
transformations.
A set of fractional linear transformations permutes the 4 singular points.
All together \cite{Oka}, these transformations form an affine Weyl group of type $E_6$.
Up to the integer shifts and permutation of the singular points, 
a generic Okamoto orbit contains three distinct Painlev\'e VI solutions.
\begin{example} \rm \label{eq:backlund}
The equations 
\begin{equation*}
P_{VI}(1/7,1/7,2/7,6/7), \quad P_{VI}(2/7,2/7,4/7,2/7), 
\quad P_{VI}(3/7,3/7,6/7,4/7)
\end{equation*} 
in Example \ref{ex:kitaevshift} 
and their algebraic solutions are related  by the Okamoto 
transformations. 
But the equations 
\begin{equation*}
P_{VI}(1/7,1/7,1/3,6/7), \quad
P_{VI}(2/7,2/7,1/3,2/7), \quad P_{VI}(3/7,3/7,1/3,4/7)
\end{equation*} 
are not related 
by the Okamoto 
transformations. 
For example, the Okamoto 
orbit of $P_{VI}(1/7,1/7,1/3,6/7)$  consists 
of Schlessinger and fractional-linear transformations  of itself 
and $P_{VI}(17/42,17/42,17/42,5/42)$, $P_{VI}(11/42,11/42,11/42,23/42)$.
\end{example}

\section{Differentiation relations from free divisors}
\label{sec:freediv}

Theorem \ref{th:kitaev} gives differential relations between coefficients of  AB-maps.
Here we observe differential relations 
with differentiations both with respect to the variable $x$ and a parameter $\w$.

\subsection{Free divisors, logarithmic vector fields}

As presented in \cite{kms15}, interesting examples of flat structures, free divisors in 
the sense of Saito  \cite{Saito}  
can be constructed from algebraic Painlev\'e VI solutions. 
In Dubrovin's context \cite{Dubr} of 
Frobenius manifolds,  the potentials which are 
solutions of the Witten-Dijkgraaf-Verlinde-Verlinde equations play a similar key role.

As discussed in \S \ref{sec:relap6}, the use of AB-maps is one of the methods 
to construct algebraic Painlev\'e VI solutions.
For these reasons, it is meaningful to study a relationship between AB-maps 
and free divisors. 
As an observation by comparing AB-maps with free divisors, 
we recognized that after a suitable homogenization of variables of $(k,\ell,m)$-minus-4
regular AB-maps, polynomials which define free divisors appear as 
polynomial components of AB-maps. 
We explain this observation by taking the following example.
\begin{example} \rm \label{ex:lt18}
We homogenize the AB-map $\varphi_1$ of Example \ref{eq:d6example}
by  $w=v/u^3$, \mbox{$x=uX/v^2$} 
with the variables $u,v,X$ of  weights 
$1,3,5$, respectively. The weighted-homogeneous polynomials are
\begin{align*}
P= & \,X, \qquad 
Q = u\,X+v^2, \\
R= & \, X^3+15u^2vX^2+20uv^3X+8v^5,\\
F= & \,  X^3+2u^2(15v-32u^3)X^2+5uv^2(8v-19u^3)X+8(2v-5u^3)v^4.
\end{align*}
Correspondingly, they satisfy $P^3F+64Q^5=R^2$. 
Let us consider the vector fields 
\begin{align} \label{eq:euler}
\V_1= &\, u\,\frac{\partial}{\partial u}+3v\,\frac{\partial}{\partial v}
+5X\,\frac{\partial}{\partial X},  \\  
\V_2= &-2 (v-3u^3) \frac{\partial}{\partial u} + (X+3u^2v) \frac{\partial}{\partial v},
\label{eq:wonder2} \\ 
\V_3= &\, 3(X+27u^2v-64u^5) \frac{\partial}{\partial u} 
+8u(7v-12u^3)v \frac{\partial}{\partial v}-40v^3  \frac{\partial}{\partial X}. 
\end{align}
They are {\em logarithmic} along the hypersurface $F=0$,
meaning that their action on 
the polynomial $F$ coincides with some polynomial multiplication:
\begin{align}  \label{eq:logarithm}
\V_1\,F = 15F, \qquad
\V_2\,F = 30u^2F, \qquad
\V_3\,F = 60(3v-16u^3)F. 
\end{align}
Consider the matrix 
\begin{equation}
M = \left( \begin{array}{ccc}
u & 3v & 5X \\
-2 (v-3u^3) & X+3u^2v & 0 \\
3(X+27u^2v-64u^5) & 8u(7v-12u^3)v & -40v^3
\end{array} \right)
\end{equation}
where the rows represent the vector fields, so that
\[
\left( \begin{array}{c} \V_1 \\ \V_2 \\ \V_3 \end{array} \right)= 
 M \left(  \begin{array}{c} \partial/\partial u \\[1pt] \partial/\partial v \\[1pt] 
 \partial/\partial X \end{array}  \right).
\]
Then $\det M = -15F$.
Existence of 3 logarithmic vector fields along 
$F=0$, 
and the identification of $F$ with $\det M$ 
up to a constant multiple means that the hypersurface $F=0$ is a {\em free divisor}.
More conceptually \cite{MoSc}, a characteristic property is 
that the logarithmic vector fields form a free module over $\CC[u,v,X]$.

The Euler vector field $\V_1$ acts on the other polynomial components $P,Q,R$
as multiplication by the weighted-homogeneous degrees $5,6,9$ (respectively).
Remarkably, the vector field $\V_2$ is logarithmic 
along the hypersurfaces $P=0$, $Q=0$, $R=0$ as well:
\begin{equation}  \label{eq:logvect}
\V_2\,P = 0, \qquad
\V_2\,Q = 6u^2Q, \qquad
\V_2\,R = 15u^2R. 
\end{equation}
This special role of $V_2$ is unexpected.

The isomonodromic Fuchsian system can be elegantly expressed in terms
of the vector fields
\begin{equation}
\widetilde{\V}_2=V_2-2u^2\,V_1, \qquad \widetilde{\V}_3=V_3+32u^2\,V_2-12uv\,V_1.
\end{equation}
The action on the AB-map
\begin{equation}
\widetilde{\varphi}_1=-\frac{P^3F}{64\,Q^5}
\end{equation}
is
\begin{equation}
V_1\,\widetilde{\varphi}_1=0, \qquad
\widetilde{\V}_2\,\widetilde{\varphi}_1=0, \qquad
\widetilde{\V}_3\,\widetilde{\varphi}_1=-\frac{15\,R}{P\,Q}\,\widetilde{\varphi}_1,
\end{equation}
and the pulled-back hypergeometric function
\begin{equation}
f=Q^{1/12}\,F^{\lambda/15}\, 
\hpg21{-1/60,\,11/60}{2/3}  
{\widetilde{\varphi}_1} 
\end{equation}
satisfies the differential system
\begin{align} \label{eq:vvv}
V_{1\,}f= &\; \big(\lambda+\textstyle\frac12\big) f, 
\nonumber \\ \widetilde{V}_{2\,}f= & \, -{\textstyle \frac{1}2} \,u^2f, \\
\widetilde{V}_3^{\,2}f= & \, -\big( (9v+20u^3)X+30u^2v^2 \big) f. \nonumber
\end{align}
The last equation has order 2, 
just as 
the hypergeometic equation.
\end{example}
As free divisors and AB-maps are defined for many algebraic Painlev\'e VI solutions,
we checked that attractive differential systems like (\ref{eq:vvv}) for pulled-back
hypergeometric solutions exists in every computed (and homogenized) case.
The computed cases are presented in \S \ref{sec:algp6}.
Existence of ``universally" logarithmic vector fields 
as in (\ref{eq:logvect}) was observed as well.
\begin{observe} \label{rm:observe}
For every computed AB-map $\varphi(X\!:\!u\!:\!v)$
in weighted-homogeneous variables $u,v,X$ of the 
respective weights $N_X,N_u,N_v$, there is a vector field 
\begin{equation} \label{eq:vf}
\widetilde{A}(X,u,v)\frac{\partial}{\partial X}+\widetilde{B}(X,u,v)\frac{\partial}{\partial u}
+\widetilde{C}(X,u,v)\frac{\partial}{\partial v}
\end{equation}
linearly independent from the Euler vector field 
\begin{equation} \label{eq:euler2}
N_x\,x\,\frac{\partial}{\partial x}+
N_u\,u\,\frac{\partial}{\partial u}+N_v\,v\,\frac{\partial}{\partial v}
\end{equation}
that acts by polynomial multiplication 
on {\em all} polynomial components of $\varphi$. 
\end{observe}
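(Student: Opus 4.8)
The plan is to first dispose of the literal existence statement by a soft module-theoretic argument, and then to explain the sharper empirical content — that the vector field occurs at low degree and with an isomonodromic origin, as in Example~\ref{ex:lt18} — which is what ultimately feeds Conjecture~\ref{th:conj}. For the literal statement I would argue as follows. Because each point of $\PP^1_x$ lies in exactly one of the fibres $\varphi\in\{0,1,\infty\}$, the six polynomial components $P,Q,R,F,G,H$, homogenized to the weighted degrees $N_X,N_u,N_v$, are pairwise coprime; let $f$ be the product of their squarefree parts and $D=\{f=0\}$ the associated reduced divisor, which is weighted-homogeneous because each homogenized component is. Consider the $\CC[X,u,v]$-module $\mathrm{Der}(-\log D)$ of polynomial vector fields $\xi$ with $\xi(f)\in(f)$. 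Away from $D$ this condition is empty, so $\mathrm{Der}(-\log D)$ has rank $3$ over $\CC[X,u,v]$, and a finite generating set therefore contains three vector fields independent over $\CC(X,u,v)$. The Euler field (\ref{eq:euler2}) also lies in $\mathrm{Der}(-\log D)$, since it multiplies the weighted-homogeneous $f$ by its weighted degree; hence not all three of those generators can be rational multiples of the Euler field, and one of them is the desired independent field. Finally, a vector field logarithmic along a product $f$ of pairwise coprime factors is automatically logarithmic along each factor — reduce the relation $\xi(f)=g\,f$ modulo $\mathrm{rad}(P_i)$ and use that it is coprime to the product of the remaining radicals — so the field is logarithmic along each component $P_i$ and has the form (\ref{eq:vf}).

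This argument does not control the degree of the coefficients, nor does it reproduce the geometric field visible in every computed case. For that, the natural route is the one underlying Theorem~\ref{th:kitaev}: an AB-map is a one-parameter family $\varphi(x,\w)$, and isomonodromy of the associated Fuchsian ODE (\ref{eq:ode4p1}) forces its coefficients, as functions of the isomonodromic time $t$ and hence of $\w$, to obey definite differential relations. Combining $\partial/\partial\w$ with $\partial/\partial x$ then produces a vector field on the affine $(x,\w)$-plane that acts on $\varphi$, and on each of $P,Q,R,F,G,H$, by multiplication by a rational function. Homogenizing $(x,\w)$ into weighted-homogeneous $(X,u,v)$ — two functions of three variables, with the redundancy fixed by requiring compatibility with the Euler field — lifts this to a field (\ref{eq:vf}) whose multiplication relations become precisely $\widetilde{V}(P_i)=g_iP_i$, with $\widetilde{V}$ of coefficient degree comparable to $\deg\varphi$, as in (\ref{eq:logvect}). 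In practice I would verify the Observation case by case against the list of \S\ref{sec:algp6}: homogenize each AB-map, and for coefficients $\widetilde{A},\widetilde{B},\widetilde{C}$ of minimal admissible weighted degree solve the over-determined linear system imposing $P_i\mid\widetilde{V}(P_i)$ for every component, checking that the solution space modulo the Euler field is non-empty — in fact one-dimensional in all the examples.

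The main obstacle to turning this into a uniform, conceptual proof is to control, along the isomonodromic flow, the transformation of the components $F,G,H$ carrying the four exceptional points, together with the implicit apparent singularity $x=q(t)$ where $q$ is a Painlev\'e~VI transcendent: the soft argument gives bare existence but loses the distinguished field, whereas the isomonodromic construction produces the right field but still requires a proof that it is logarithmic on \emph{each} component for \emph{every} $(k,\ell,m)$-minus-$4$ regular AB-map, not merely for the finitely many in \S\ref{sec:algp6}. Establishing this, together with the $3\times3$ matrix identity illustrated in Example~\ref{ex:lt18}, is exactly what would strengthen the Observation into the free-divisor statement of Conjecture~\ref{th:conj}.
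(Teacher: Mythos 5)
Your reading of the situation is accurate, and your operative procedure is essentially the paper's own: Observation \ref{rm:observe} is an empirical claim about the finitely many computed maps of \S\ref{sec:algp6}, and the paper supports it purely computationally --- for each homogenized AB-map the authors compute the \emph{lowest-degree} syzygy between $R$ and its partial derivatives $\partial R/\partial X,\partial R/\partial u,\partial R/\partial v$ (which is exactly your over-determined linear system at minimal admissible weighted degree, run first on the single component $R$), and then check that the resulting vector field acts logarithmically on the remaining components $P,Q,F,G,H$; they note that the chosen syzygy is always much smaller than the alternatives. So your case-by-case verification coincides with theirs, and no uniform proof is attempted in the paper either --- the uniform statement appears only, in dehomogenized form, as Conjecture \ref{th:conj}. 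What you add, the soft module-theoretic argument, is correct but proves only the degree-free reading of the statement, and under that reading it can be collapsed even further: with $f$ the radical of the product of the (squarefree) components, the field $f\,\partial/\partial X$ already acts on every component $W$ by multiplication (since $W\mid f$) and is not a rational multiple of the Euler field, so the rank computation for $\mathrm{Der}(-\log D)$ is not needed. This makes plain that the literal wording is nearly vacuous and that the actual content --- which the paper signals by ``low degree syzygies'' and ``much smaller than alternatives,'' and which feeds Conjecture \ref{th:conj} --- is the existence of a \emph{distinguished low-degree} field; you flag this yourself, correctly. Your isomonodromic heuristic for the origin of that field is plausible and consistent with Example \ref{ex:lt18} and with the dehomogenized field annihilating $\varphi$, but, like the paper, you stop short of a proof there, so relative to what the paper actually establishes your proposal has no gap: it reproduces the computational verification and honestly identifies what remains conjectural.
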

This observation is remarkable. As a weaker implication, 
it says that there are low degree syzygies between 
$\partial U/\partial X$, $\partial U/\partial u$, $\partial U/\partial v$ 
and $U$ for any polynomial component $U\in\{P,Q,R,F,G,H\}$ 
as in (\ref{eq:phi1phi})--(\ref{eq:phi2phi}).
We found the exceptional vector fields by computing the lowest degree syzygy 
between the derivatives of $R$, and checking the observation
on other polynomial components. The chosen syzygy is always much smaller
than alternatives.

\subsection{Dehomogenization}

Observation \ref{rm:observe} can be modified to apply to non-homogeneous 
AB-maps $\varphi(x,\w)$. The modified claim is that there exists a single vector field 
that acts by polynomial multiplication on all polynomial components of $\varphi$.
If de-homogenization of $\varphi(X\!:\!u\!:\!v)$ is simply $u=1$,
one can find the ``universally" logarithmic vector field as 
a linear combination of (\ref{eq:vf}) and (\ref{eq:euler2})
with eliminated $\partial/\partial u$, and then specialize to $u=1$.
\begin{example} \rm 
Recall Example \ref{eq:d6example} and consider the vector field
\begin{equation}
\OP_1=-2x(x+1) \frac{\partial}{\partial x}+ (\w x+6\w-15) \frac{\partial}{\partial w}.
\end{equation}
This vector field acts acts by polynomial multiplication 
on all polynomial components of $\varphi_1(x)$,
including on $x$ and $x+1$. 
To derive this vector field from Example \ref{ex:lt18},
we substitute 
\begin{align*}
 \frac{\partial}{\partial u} 
& = \frac{x}{v^2} \frac{\partial}{\partial x}-\frac{3v}{u^4} \frac{\partial}{\partial\w}
 \qquad = \frac1u \left(x\frac{\partial}{\partial x}-3\w \frac{\partial}{\partial\w} \right), \\
\frac{\partial}{\partial v} 
& =-\frac{2uX}{v^3}\, \frac{\partial}{\partial x}+\frac{1}{u^3} \frac{\partial}{\partial\w}
 =  \frac1v \left( -2x\frac{\partial}{\partial x}+w\frac{\partial}{\partial\w} \right)
\end{align*}
into $V_2$, and recognize $\OP_1$ after multiplication by $u/v$.
\end{example}
\begin{example} \rm
For Example \ref{ex:lt08}, the vector field
\begin{equation}
\OP_2=(x-1)(3x+\w) \frac{\partial}{\partial x}+\w\,(7x+2\w-9) \frac{\partial}{\partial \w}
\end{equation}
acts on $P,R,G_1,G_2$ and even on $r_0$ 
by polynomial multiplication. 
Considering 
$\varphi_2(x,s)$  after the substitution (\ref{eq:lt08subs}),
the vector field
\begin{equation*}
\widetilde{\OP}_2=-14(s^2+3)x(x-1)\frac{\partial}{\partial x}
+\big( 2s(s^2+7)x+(s+1)(s-3)(s^2+s+2)\big) \frac{\partial}{\partial s}
\end{equation*}
is logarithmic for every polynomial component (with cleared denominators $\in\QQ[s]$).
As $G_2$ factors $(x-x_1)(x-x_2)(x-x_3)$ over $\QQ(s)$,
the vector field 
is logarithmic even along the hypersurfaces $x-x_k=0$ for $k\in\{1,2,3\}$. 

For Example \ref{ex:lt33}, the vector field
\begin{equation}
\OP_3=(x^2-2(\w+3)x+24) \frac{\partial}{\partial x}+
(7\w x+4\w^2-24\w-24) \frac{\partial}{\partial\w}
\end{equation}
acts on $P,R,F,G$ by polynomial multiplication.
\end{example}

If a vector field is logarithmic along two hypersurfaces $F=0$, $G=0$, 
it is logarithmic along $FG=0$ as well. In the observed examples,
the exceptional vector fields annihilate the AB-maps $\varphi_j$. 
Consequently, those vector fields can be normalized to 
\begin{equation} \label{eq:vfnh}
{A}(x,\w)\frac{\partial}{\partial x}+{B}(x,\w)\frac{\partial}{\partial \w},
\end{equation}
with ${A}(x,\w)=\partial\varphi_j/\partial\w$ and 
$B(x,\w)=-\partial\varphi_j/\partial x$. 
This explains why the coefficient to $\partial/\partial\w$ or $\partial/\partial s$ 
is linear in $x$  in the above examples, and the roots are the extra branching points 
$q_j$ for $j\in\{1,2,3\}$. The extra branching point is the only root
of $\partial\varphi_j/\partial x$ that is not a root of $\partial\varphi_j/\partial\w$. 

Observation \ref{rm:observe} becomes simpler in a dehomogenized form.
With more specificity, we formulate the following conjecture. 
\begin{conjecture} \label{th:conj}
For any AB-map $\varphi(x,\w)$ with a field of definition $K=\QQ(w)$, 
there exists a vector field $(\ref{eq:vfnh})$ 
that acts on every $K[x]$-irreducible factor 
of the numerators and the denominators of  $\varphi$ and $\varphi-1$
by polynomial multiplication. The vector field annihilates $\varphi(x,\w)$.
\end{conjecture}
As exemplified above, the conjecture implies that $B(x,\w)$ is linear in $x$, 
and its root gives  the extra branching point of $\varphi$ outside 
the critical fibers $\{0,1,\infty\}$.
By the asymptotics at $x=\infty$, the degree of $A(x,\w)$ in $x$ is at most $2$.

We checked the conjecture for all known AB-maps, including the $(2,3,\infty)$-minus-5
maps from \cite{kms15} that we mentioned in Remark \ref{rm:vhoeij}.
Explicit prior knowledge of these vector fields should be very useful 
in speeding up computation of a desired AB-map,
by utilizing new algebraic equations for undetermined coefficients.

\section{Algebraic Painlev\'e VI solutions}
\label{sec:algp6}

Algebraic solutions of the Painlev\'e VI equation were recently classified by
Lisovyy and Tykhyy \cite{LiTy}.  
Apart from infinite families of rational or Picard's $P_{VI}(0,0,0,1)$ solutions
presented in \cite[Propositions 49, 51]{LiTy} and their Okamoto orbits,  
there is a finite list (up to Okamoto 
transformations) of 3 parametric and 45 non-parametric solutions. 
The non-parametric solutions were already derived by Dubrovin, Mazzocco \cite{DuMo},
Kitaev \cite{Kit1}, \cite{Kit2} and Boalch \cite{Boa1}, \cite{Boa2},  \cite{Boa3} in 2000--2007.


\subsection{AB-maps for algebraic Painlev\'e VI solutions}

Kitaev conjectured \cite{K1}
that all algebraic solutions of the Painlev\'e VI equation
can be obtained from pull-back transformations by $(k,\ell,m)$-minus-4 
regular AB-maps, up to Okamoto 
and Schlessinger 
transformations.
By checking the Lisovyy-Tykhyy classification we see that this conjecture is true
for the $3+45$ solutions in  \cite{LiTy}:
\begin{enumerate}
\item The 3 Okamoto 
orbits \#II\,--\,\#IV of parametric solutions have corresponding pull-back transformations,
as first established in \cite{AK}. 
\item The Lisovyy-Tykhyy solutions \#8, \#33 are obtained by 
the pull-back maps $\varphi_2(x)$, $\varphi_3(x)$ of Examples \ref{ex:lt08}, \ref{ex:lt33}.
The similar solutions \#32, \#34 solve $P_{VI}(2/7,2/7,1/3,2/7)$ and $P_{VI}(3/7,3/7,1/3,4/7)$.
They are obtained from $\varphi_3(x)$ by additional Schlessinger transformations 
described in Example \ref{ex:kitaevshift}.
\item The other 
solutions in \cite{LiTy} correspond (up to Okamoto 
transformations) to isomonodromic Fuchsian equations with finite monodromy. 
Existence of pull-backs is implied by celebrated Klein's theorem \cite{klein77}:
any second order Fuchsian equations with finite monodromy is a pull-back 
of a hypergeometric equation with finite monodromy. 
\end{enumerate}
In \refpart{iii}, there are  33 Okamoto 
orbits corresponding to Fuchsian systems with the icosahedral monodromy group; 
and 7 octahedral (\#4, \#5, \#9, \#10, \#20, \#21, \#30),  1 tetrahedral (\#3) cases.
As Schlessinger transformations do not change monodromy of Fuchsian equations,
the exponent differences $\theta_0,\theta_1,\theta_t,\theta_\infty$ can be shifted by integers.
This gives infinitely many Kleinian pull-backs by AB-maps of unbounded degree
in these Okamoto 
orbits. Okamoto transformtions are necessary, 
as (for example, \#16, \#17, \#31 in \cite{LiTy})
the Dubrovin-Mazzocco 
solutions of $P_{VI}(0,0,0,4/5)$, $P_{VI}(0,0,0,2/5)$, $P_{VI}(0,0,0,2/3)$ 
correspond to Fuchsian systems with logarithmic singularities
and cannot be obtained directly by a pull-back transformation.

\begin{table}
\renewcommand*{\baselinestretch}{1.02}
\vspace{-10pt}  \hspace{-34pt}
\begin{tabular}{@{}rllllcr@{}}
\hline
\multicolumn{3}{l|}{Painlev\'e VI solution} & 
\multicolumn{2}{l|}{Almost Belyi map} &
\multicolumn{2}{l}{Braid monodromy} \\  \hline
\cite{LiTy}\!\!\!  & 
Monodromy & \multicolumn{1}{c|}{Exp. differences\!} &
Passport & \multicolumn{1}{l|}{Ref. or $d$} & Passport & $d^*$  \\ \hline
II 
& $2/\!/1^2$ 
& $a,a,b,1-b$ 
& $1^2/1^2/2$ & $\cite{AK}, N_ 1/N_2\!\!$ & 1/1/1 & 1 \\ 
III 
& $2\,1/\!/3$ 
& $a,a,2a,\frc23$ 
& $1^2\,2/3\,1/2^2$ & \cite{AK},$\,N_7/N_9\!\!$ &  $3/2\,1/2\,1$ & 3 \\ 
IV 
& $3\,1/\!/\!/$ 
& $a,a,a,\frc12$ 
& $1^3/2\,1/3$ & \cite{AK},\,$N_3/N_4\!\!$ & 1/1/1 & 1 \\ 
& \multicolumn{1}{r}{$(a\!=\!2b\pm\frac12)\!\!$} & $b,b,b,1-3b$ 
& $1^3\,3/3^2/2^3$ & \cite{Kit1},$\,N_{23}$ & $2/2/1^2$ & 2 \\ 
1 
& $3\,2/\!/2^21$ 
& $\frc13,\frc13,\frc15,\frc35$ 
& $3^21^2/5\,1\,2/2^4$ & $N_{33}$ & $7\,3/4\,3\,2\,1/2^41^2$ & 10 \\
2 
& $3\,2/\!/3\,1^2$ 
& $\frc15,\frc15,\frc25,\frc25$ 
& $5\,1^22\,3/3^4/2^6$ & \cite{Kit1},$\,N_{61}$ & $6\,5\,4/3^42\,1/2^71$ & 15 \\
3 
& $3^2/\!/2^21^2$ 
& $\frc1{3},\frc1{3},\frc1{2},\frc{1}{2}$ 
& $3\,1/3\,1/2\,1^2$ & \cite{AK},$\,N_6$ & $4\,2/4\,2/3\,1^3$ & 6 \\
& & $\frc1{2},\frc1{2},\frc1{3},\frc{1}{3}$ & 
$2^21^2\!/3\,1\,2/3^2$ & $N_{19}$ & $5\,3\,2^2/3^32\,1/3^32\,1$ & 12 \\
4 
& $4\,2/3^2/2^21^2$ 
& $\frc14,\frc12,\frc13,\frc12$ 
& $4\,1\,2/3^21/2^{3}1$ & $N_{28}$ &  $6^25\,3^21/4^23^42^2/3^32^61^3$ & 24 \\
5 
& $4\,2/\!/3\,1^3$ 
& $\frc1{4},\frc1{4},\frc1{3},\frc{1}{3}$ 
& $4\,1^2/3\,1\,2/2^{3}$ & \cite{Kit1},$\,N_{24}$ & $5\,3\,1/5\,3\,1/2^41$ & 9 \\
 & & $\frc1{3},\frc1{3},\frc1{4},\frc{1}{4}$ 
& $3^21^2/4\,1\,3/2^{4}$ & $N_{34}$ & $7\,4\,3\,1/4^23^21/2^71$ & 15 \\
6 
& $3\,2\,1/\!/5\,1$ 
& $\frc15,\frc25,\frc25,\frc23$ 
& $5\,1\,2^2/3^31/2^{5}$ & $N_{52}$ & $7\,3^22/4\,3^32/2^71$ & 15 \\
7 & idem 
& $\frc13,\frc15,\frc15,\frc25$ 
& $3^31/5\,1^23/2^{5}$ & $N_{50}$ & $8\,4\,2\,1/4\,3^32/2^71$ & 15 \\
8 
& $3\,2^2/\!/\!/$ 
& $\frc17,\frc17,\frc17,\frc57$ 
& $7\,1^32/3^4/2^{6}$ & \cite{Kit1},$\,N_{57}$ & $4\,3/3^21/2^31$ & 7 \\
9 
& $3^22/\!/3^21^2$ 
& $\frc1{4},\frc1{4},\frc1{2},\frc{1}{2}$ 
& $4\,1^2/2^{2}1^2/3^2$ & \cite{Kit1},$\,N_{18}$ &  $5\,1/3\,2\,1/3\,2\,1$ & 6  \\
10 
& $4\,2^2/\!/3\,2^21$ 
& $\frc14,\frc13,\frc13,\frc12$ 
& $4\,1/3\,1^2/2^{2}1$ & $N_{13}$ & $5\,3\,2/4^21^2/3^22\,1^2$ & 10 \\
11 
& $3\,2^21/\!/5\,3$ 
& $\frc15,\frc15,\frc25,\frc12$ 
& $5\,1^22/2^{4}1/3^3$ & $N_{43}$ & $7\,6\,3\,2/3^52\,1/3^22^51^2$ & 18 \\
12 & idem 
& $\frc15,\frc25,\frc25,\frc12$ 
& $5^21\,2^2/2^{7}1/3^5$ & $d=15$ & $7^36\,5\,3\,1/3^{11}2\,1/3^22^{14}1^2$ & 36 \\
15 
& $3^22^2/\!/\!/$ 
& $\frc15,\frc25,\frc12,\frc12$ 
& $5^31\,2/2^{8}1^2/3^6$ & $d=18$ & $7^26\,5^73\,2/3^{18}2^3/3^52^{21}1^3$ & 60 \\
16 
& $5\,3\,1^2/\!/\!/$ 
& $\frc25,\frc25,\frc25,\frc25$ 
& $5^32^33/3^8/2^{12}\!$ & $d=24$ & $7\,5^23/3^61^2/2^{10}$ & 20 \\
17 & idem 
& $\frc15,\frc15,\frc15,\frc15$ 
& $5\,1^34/3^4/2^{6}$ & \cite{Kit1},$\,N_{59}$ & $5\,3\,2/3^31/2^5$ & 10 \\
18 
& $5\,2^21/\!/\!/$ 
& $\frc13,\frc13,\frc13,\frc45$  
& $3\,1^3/5\,1/2^{3}$ & $N_{21}$ & $4\,1/3\,2/2^21$ & 5 \\
19 & idem 
& $\frc13,\frc13,\frc13,\frc25$ 
& $3^51^3/5^33/2^{9}$ & $d=18$ & $8\,5\,2/4\,3^31^2/2^71$ & 15 \\ 
21 
& $4^22^2\!/\!/3^22^21^2$ 
& $\frc13,\frc13,\frc12,\frc12$ %
& $3^21^2/2^31^2/4^2$ & $d=12$ & $4^23\,1/4^23\,1/3^22^21^2$ & 12 \\
25 
& $5\,3\,2^2\!/\!/3^22^21^2\!\!$ 
& $\frc13,\frc15,\frc25,\frc12$ 
& $3^41/5^21\,2/2^61\!\!$ & $d=13$ 
& $\hspace{-7pt}7^46^35^64\,3\,1/4^43^{20}2^4\!/3^62^{30}1^6\!\!$ & 84 \\
30 
& $3^42^2/\!/\!/$ 
& $\frc14,\frc12,\frc12,\frc12$ & $4^21/2^31^3/3^3$ & $d=9$ 
& $5\,4^22\,1/3^42^2/3^42\,1^2$ & 16 \\
&& $\frc1{8},\frc1{8},\frc1{8},\frc{7}{8}$ 
& $8\,1^4/3^4/2^6$ & \cite{Kit2},$\,N_{56}$ &  $3\,1/3\,1/2^2$ & 4 \\
\hline
13 
& $5\,3\,1/\!/\!/$ 
& $\frc25,\frc25,\frc25,\frc23$ 
& $5^22^3/3^51/2^{8}$ & $d=16$ & $7\,3\,2/3^32\,1/2^6$ & 12 \\
14 & idem 
& $\frc15,\frc15,\frc15,\frc13$ 
& $5\,1^3/3^22/2^{4}$ & \cite{Kit1},$\,N_{37}\!$ & $5\,1/3\,2\,1/2^3$ & 6 \\
20 
& $4^22^2/\!/3^4$ 
& $\frc12,\frc13,\frc12,\frc12$ 
& $4^22/3^31/2^{4}1^2$ & $d=10$ & $6^34^33^2/4^23^72^21^3/3^62^81^2$ & 36 \\
22 
& $5^22/\!/3^22^21^2$ 
& $\frc13,\frc13,\frc15,\frc25$ 
& $3^41^2/5^21\,3/2^{7}$ & $d=14$ & $8^36^25\,4\,2\,1/4^43^{10}1^2\!/2^{23}1^2$ & 48 \\
23 
& $5\,3\,2^2/\!/5\,3^21$ 
& $\frc15,\frc15,\frc13,\frc12$ 
& $5\,1^2/3^21/2^{3}1$ & $N_{27}$ & $6^32\,1/4^23^32\,1^2/3^32^51^2$ & 21 \\
24 & idem 
& $\frc25,\frc25,\frc13,\frc12$ 
& $5^32^2/3^61/2^{9}1$ & $d=19$ 
& $\!7^45^44\,3\,2/4^23^{15}2\,1^2\!/3^32^{23}1^2\!$ & 57 \\
26 
& $5\,3^22^2/\!/\!/$ 
& $\frc13,\frc13,\frc13,\frc35$ 
& $3^31^3/5^22/2^{6}$ &  \cite{vk09}  & $7\,5\,3/4^23\,2\,1^2/2^71$ &15 \\
27 & idem 
& $\frc13,\frc13,\frc13,\frc15$ & $3^71^3/5^44/2^{12}$ 
& $d=24$ & $9^25^52/4^33^{10}1^3/2^{22}1$ & 45 \\
28 
& $5^23\,2/\!/5\,3^21^4$ 
& $\frc13,\frc13,\frc25,\frc25$ 
& $3^61^2\!/5^32\,3/2^{10}\!$ & $d=20$ & $8^27\,6\,5^74^23/4^53^{17}2\,1^2/2^{37}1$
& 75 \\
29 & idem 
& $\frc15,\frc15,\frc13,\frc13$ 
& $5^21^2\!/3^31\,2/2^{6}$ & $d=12$ & $6^45^34\,2/5^33^82^21^2/2^{22}1$ & 45 \\
31 
& $5^23^21^2/\!/\!/$ 
& $\frc13,\frc13,\frc13,\frc13$ 
& $3^51^32/5^4/2^{10}$ & \cite{vk09} & $5^53\,2/5\,4^23^51^2/2^{15}$ & 30 \\
33 & $7\,3^22^21/\!/\!/$ 
& $\frc17,\frc17,\frc17,\frc23$ 
& $7\,1^3/3^31/2^5$ & \cite{Kit2},$\,N_{48}\!$ & $8\,6\,1/4\,3^31^2/2^71$ & 15 \\
35 
& $5^23^22^2\!/\!/5^23^21^4\!$ 
& $\frc2{5},\frc2{5},\frc1{2},\frc{1}{2}$ 
& $5^42^2\!/2^{11}1^2\!/3^8$ & $d=24$ & 
$7^45^54\,3/3^{19}2\,1/3^32^{24}1^3$ & 60 \\
36 & idem 
& $\frc1{5},\frc1{5},\frc1{2},\frc{1}{2}$ 
& $5^21^2\!/2^51^2\!/3^4$ & $d=12$ & $6^35^22/3^92\,1/3^32^91^3$ & 30 \\
37 
& $5^23^22^2\!/\!/5\,3^22^41\hspace{-6pt}$ 
& $\frc25,\frc13,\frc13,\frc12$ 
& $5^32/3^51^2\!/2^81$ & $d=17$ 
& $\!\!\!7^46\,5^94\,2/4^63^{18}2^21^3\!/3^52^{33}1^4\!\!\!$ & 85 \\
38 & idem 
& $\frc15,\frc13,\frc13,\frc12$ 
& $5^21/3^31^2\!/2^51$ &  \cite{vk09} & $6^35^64\,3/4^63^82^21^3/3^52^{18}1^4$ & 55 \\
39 
& $5^23^22^4/\!/\!/$ 
& $\frc13,\frc13,\frc13,\frc12$ 
& $3^41^3/2^71/5^3$ & $d=15$ & $5^53\,2/4^43^41^2/3^22^{11}1^2$ & 30  \\
\hline
\end{tabular} \label{tb:abmaps} \vspace{-8pt}
\caption{AB-maps for algebraic Painlev\'e VI solutions of genus $0$ (in the upper part)
or genus 1 (in the lower part).}
\end{table}

With the construction of Theorem \ref{th:kitaev} in mind,
we computed AB-maps for all Lisovyy-Tykhyy cases
algebraic Painlev\'e VI solutions of genus 0 or 1.
The results are presented in Table \ref{tb:abmaps}, with the genus 0 and 1 cases
separated by a horizontal line. The first column gives the enumeration in \cite{LiTy}.

The second column of Table \ref{tb:abmaps}
 gives the branches permutation monodromy of the Painlev\'e VI solutions,
using the fact that in a parametrization $(q(s),t(s))$ of those algebraic solutions,
$t(s)$ is a Belyi map (by the Painlev\'e property). The second column gives
the passport of that Belyi map (without the [\,] delimiters), 
but the notation is compacted when branching patterns
in 2 or all 3 fibers is the same. The repetition is indicated by the number of $/$'s.
For example, $3^2\!/\!/2^21^2$ for the solution \#3 means the passport $[3^2\!/3^2\!/2^21^2]$,
and $3^22^2\!/\!/\!/$ for the solution \#15 means the passport $[3^22^2\!/3^22^2\!/3^22^2]$,
etc. The algebraic degree of the Painlev\'e VI solution can be quickly determined
from the passport. 

The third column gives the exponent differences of representative Painlev\'e VI equations
$P_{VI}(\theta_0,\theta_1,\theta_t,\theta_\infty)$. 
Two distinct Painlev\'e VI equations are given for the parametric solution IV,
because they are generally not related by Shlessinger and fractional-linear transformations,
and AB-maps (of degree 3 and 6) exist for both of them.
The case \#30 is represented by two Painlev\'e VI equations solutions for the same reason,
while \#3, \#5 take two lines each because two AB-maps for them are already known.

The fourth column gives the passport of a $(2,3,m)$-minus-4 regular AB-map 
giving an algebraic solution of $P_{VI}(\theta_0,\theta_1,\theta_t,\theta_\infty)$
by Theorem \ref{th:kitaev}. The three fibers are ordered to match the 
order of the $\theta_j$'s in the third column conveniently. 
The fifth column either gives the degree $d$ of the AB-map if it was not computed  previously,
or gives references to \cite[Table 1]{vHK5} (by the $N_j$-label)
and other publications \cite{AK}, \cite{Kit1}, \cite{Kit2}, \cite{VK2}.
Given $\theta_0>0,\theta_1>0,\theta_t>0,\theta_\infty<1$,
the degree of the pull-back map from $E(1/2,1/3,1/m)$ equals
\begin{equation}
d=\frac{\theta_0+\theta_1+\theta_t-\theta_\infty}{\frac12+\frac13+\frac1m-1}.
\end{equation}
This follows from the Hurwitz theorem, 
or (assuming the AB-map is defined over $\RR$) 
by geometric consideration of spherical or hyperbolic areas 
in analytic continuation of pulled-back hypergeometric functions
by the Schwarz reflection principle \cite[Lemma 6.2, etc.]{VK2}.


The last two columns 
characterize an important Belyi map derived from each AB-map $\varphi(x,\w)$. 
All presented AB-maps are parametrized 
(as Hurwitz spaces of dimension 1) by algebraic curves of genus 0,
with $\w$ as a minimal projective parameter of those curves. 
The fourth fiber $\psi(w)=\varphi(q,\w)$ of the extra branching point $x=q$ 
is a function of $\w$ that is intrinsic to $\varphi(x,\w)$. It gives the braid group action
on $\varphi(x,\w)$ as the fourth fiber is moved continuously around the other three fibers. 
The  function $\psi(w)$ is a Belyi map  \cite[Remark 5.3]{vHK5}, 
and is a good measure of complexity of the AB-map.
The passport and degree $d^*$ of $\psi(w)$ are given
in the last two columns of Table \ref{tb:abmaps}.
For the $\w$-values in the three critical fibers of $\psi(\w)\in\{0,1,\infty\}$,
the AB-map specializes to Belyi maps of degree $\le d$.

\begin{remark} \rm \label{rm:commonab}
The cases \#32, \#34 are skipped in Table \ref{tb:abmaps},
because Schlessinger transfomations are necessary 
to obtain those  Painlev\'e VI solutions. As we discussed in Example \ref{ex:kitaevshift},
the AB-map of \#33 has to be applied for a  pull-back from $E(1/2,1/3,2/7)$
or $E(1/2,1/3,3/7)$. Kitaev \cite{Kit1} stresses that
pairs of icosahedral cases with the same monodromy 
(such as \#6, \#7; see the second column in Table \ref{tb:abmaps})
can be similarly obtained by pull-backs with respect to a common AB-map 
applied to $E(1/2,1/3,1/5)$  and $E(1/2,1/3,2/5)$, 
with a Schlessinger transformation necessary after one or other pull-back.
\end{remark}

Examples of AB-maps for the solutions \#40\,--\,\#45 in \cite{LiTy} of genus 2, 3 or 7 
remain to be computed. But even these cases can be considered as handled
if we allow Kitaev's quadratic transformations \cite{Kit91} of Painlev\'e VI solutions
and corresponding isomonodromic Fuchsian systems. Derivation of the Painlev\'e VI solutions
by these quadratic transformations is demonstrated in \cite{KV1}.


The AB-maps presented in Table \ref{tb:abmaps} 
are not necessarily unique  for the passports given in fourth column. 
For example, \cite[Table 1]{vHK5} gives also composite maps 
with the degree 6 and 12 passports of the entries \#9 and \#30.
Another composite map with the degree 20 passport for 
\#31 is given in \cite[\S 5]{VK2}. 
As explained in \cite{Kit1}, compositions of Belyi maps 
with an AB-map $\varphi_0$ give Painlev\'e VI solutions 
(by the RS-transformations) that can be obtained from $\varphi_0$ already.
Thus composite AB-maps are not useful in deriving complicated
algebraic Painlev\'e VI solutions.

\subsection{Computation of AB-maps}

Here we demonstrate computation of AB-maps for the Painlev\'e VI solutions 
\#15 and \#22. 
As these examples show, 
identification of Fuchsian equations (\ref{eq:ode4p1}), (\ref{eq:qpback})
using Painlev\'e VI solutions straightforwardly gives 
the singularity polynomials $F,G,H$, \mbox{$x-q$} of the AB-maps
(and accessory parameters of Fuchsian equations)
and a ready, convenient parameter of the Hurwitz curve. 

\begin{example} \rm
To find an AB-map with the passport $[3^6\!/5^32\,1/2^{8}1^2]$
for the algebraic solution \#15, we are looking for a polynomial identity
\begin{equation} \label{eq:abc15}
P^3+r_0 Q^5G=R^2H
\end{equation}
with $P=x^6+a_1x^5+\ldots+a_6$, $Q=x^3+b_1x^2+b_2x+b_3$, 
$R=x^8+c_1x^7+\ldots+c_8$, $G=x$ and $H=x^2+d_1x+d_2$. 
After clearing denominators in the logarithmic derivative ansatz 
(\ref{eq:logdif1})--(\ref{eq:logdif9}) with $h_1=h_2=2$, $S=GH/(x-q)$,
we get the equations
\begin{align*}
0= & \, (2q+7b_1-a_1-2c_1)\,x^8+(12b_2\!-\!4a_2\!-\!2c_2\!+\!2qc_1\!+\!4a_1b_1)\,x^7+\ldots \\
0= & \, (2q+7b_1-4a_1+d_1)x^{12}\!+\!(
12b_2\!-\!4a_2\!-\!2c_2\!+\!4qa_1\!+\!5b_1c_1 
+\ldots)\,x^{11}\!+\ldots.
\end{align*}
From their leading coefficients we can consequently eliminate all coefficients of $P$, $R$
except $a_2$. Next we compute the pull-back (\ref{eq:qpback})--(\ref{eq:qpback9}),
with $k=3$, $\ell=2$, $m=5$, thus $a=-1/60$, $b=11/60$.
The coefficient $W_2$ 
in (\ref{eq:qpback}) equals
\begin{equation*}  \label{eq:pbackyx} \!
\frac{ 
27x^9\!+\!(11a_1\!+\!82b_1\!-\!104d_1\!-\!289q)x^8
\!+\!(11a_2\!+\!282b_2\!-\!224d_2\!-\!\frac{11}{4}b_1^2 
\!+\!\ldots)x^7\!+\!\ldots}{900
\,(q-x)\,H\,G^2\,Q^2}.
\end{equation*} 
To compute the corresponding equation (\ref{eq:ode4p1}),
we start with this solution $q_{15}(t_{15})$ of $P_{VI}(1/5,1/2,1/2,3/5)$:
\begin{align}
q_{15}= & -\frac{2s(s-1)(s-5)^2(s^2-3)(s^2+4s+5)}
{(s+1)^2(s+5)(s^2-4s+5)(s^4+6s^2-75)},  \\
t_{15}= & -\frac{(s-1)^3(s-5)^3(s^2+4s+5)^2}{(s+1)^3(s+5)^3(s^2-4s+5)^2}.
\end{align}
It differs from the solution of $P_{VI}(1/2,1/5,1/2,2/5)$ in \cite{LiTy}
by the fractional-linear transformation $(q_{15},t_{15})\mapsto (1-q_{15},1-t_{15})$.
We express the entities in (\ref{eq:hamilt})--(\ref{eq:ode4p1})
in the parametrized form:
\[
p_{15}=-\frac{s(s+1)^2(s+5)(s^2-4s+5)(s^4+6s^2-75)}
{10(s-1)(s-5)^2(s^4-25)(s^2+4s+5)}, \quad
\Theta = -\frac{3}{100}, \quad\mbox{etc.}
\]
The symmetry between $x=1$ and $x=t_{15}$ is realized by $s\mapsto -s$.
To identify \mbox{$(x-1)(x-t_{15})$} with the irreducible polynomial $H$,
we scale $x\mapsto x/K$ with 
\[
K=s\,(s+1)^3(s+5)^3(s^2-4s+5)^2.
\] 
The coefficient $W_1$ 
in  (\ref{eq:ode4p1}) is thereby
divided by $K^2$ (along with the substitution of $x$) 
and becomes a function of the invariant $u=s^2$:
\begin{equation*}
W_1=\frac{3x^2+\frac{6u(41u^6-900u^5 
+\ldots+46875)}{u^2+6u-75}x+
\frac{4u^2(u-1)(u-3)(u-25)^2(u^2-6u+25)(5u^5+\ldots-9375)}{u^2+6u-75}}
{100\,(q-x)\,G\,H},
\end{equation*}
with explicitly
\begin{align}
H= &\, x^2-4u(5u^4\!-\!80u^3\!+\!678u^2\!-\!2000u\!+\!3125)x 
    -u(u\!-\!1)^3(u\!-\!25)^3(u^2\!-\!6u\!+\!25)^2, \nonumber \\
q=  &-\frac{2u(u-1)(u-3)(u-25)^2(u^2-6u+25)}{u^2+6u-75}. 
\end{align}
This parametrizes $d_1,d_2,q$. 
The remaining coefficients $a_2,b_1,b_2,b_3$ are obtained
from the identification \mbox{$W_1=W_2$}. 
After clearing denominators, we get a polynomial expression of degree 8 in $x$.
The leading coefficients gives immediately
\begin{equation}
b_1=-\frac{8u(u^6-15u^5-14u^4+3326u^3-29575u^2+100625u-187500)}{u^2+6u-75}.
\end{equation}
The coefficient to $x^7$ is linear in $a_2,b_2$, and the next two coefficients 
are linear in $b_3$. After elimination of $b_2,b_3$, 
we get a quadratic polynomial in $a_2$ that factorizes.
We check both candidates for $a_2$ on another equation,
and the correct value is
\begin{align*}
a_2= & -4u(u^{10}+1340u^8-38600u^7+421150u^6-3081320u^5+20032500u^4 \nonumber \\
& \qquad\; -97975000u^3+131015625u^2+703125000u-2109375000).
\end{align*}
This gives
\begin{align*}
b_2= & -\frac{64u(u-25)^3(11u^6-165u^5+968u^4-3082u^3+6875u^2-20625u+31250)}{u^2+6u-75},\\
b_3= &\, \frac{512u^2(u-3)(u-25)^6(u^2-6u+25)^2}{u^2+6u-75}
\end{align*}
and the other coefficients. The factor $r_0$ can be determined 
by dividing the left-hand side of (\ref{eq:abc15}) by $H$ with respect to $x$,
and looking at the remainder.
We find $r_0=27u(u^2+6u-75)^5$.

Simplification of the obtained AB-map to a presentable size is a tedious,
less automated task that may take much more time than the above computation.
The basic ideas are to simplify the Belyi map $\varphi(q(u),u)$ stated in
the last two columns in Table \ref{tb:abmaps} (of degree 60);
simplification of elliptic surfaces such as $y^2=GQ$;
and considering factorization of the discriminants,
resultants of $P,Q,R,H$ with respect to $x$.
For example, the transformation $u=5v$, \mbox{$x=100x+500v(v-5)^2(5v^2-6v+5)$}
is useful for a start, introducing high powers of $(v-1)$ in the coefficients
while keeping the powers of {$v,v-5,5v^2-6v+5$}.
\end{example}

\begin{example} \rm
To find an AB-map with the passport $[3^41^2/5^21\,3/2^{7}]$
for the Painlev\'e VI solution \#22, we are looking for a polynomial identity
\begin{equation} \label{eq:abc22}
P^3F+r_0 Q^5G=R^2
\end{equation}
with $P=x^4+a_1x^3+\ldots+a_4$, $Q=x^2+b_1x+b_2$, 
$R=x^7+c_1x^6+\ldots+c_7$, $F=x^2+d_1x+d_2$ and $G=x+e_1$.
We do not hurry with setting $e_1=0$ by choosing the point $x=0$. 
In the logarithmic derivative ansatz we have $h_1=h_2=3$, $S=FG/(x-q)$.
It allows to eliminate straightforwardly  all coefficients of $P,R$ except $a_3$. 
We calculate the coefficient $W_2$ 
in (\ref{eq:qpback}). 

To compute the Fuchsian equation (\ref{eq:ode4p1}),
we use the Painlev\'e VI solution of $P_{VI}(1/3,1/3,1/5,2/5)$ from \cite{LiTy},
with $z=\sqrt{3(5s+1)(8s^2-9s+3)}$:
\begin{align}
q_{22}= & \, \frac12+\frac{140s^6+1029s^5-1023s^4+360s^3-288s^2+27s+27}
{18z(s+1)(7s^3-3s^2-s+1)},  \\
t_{22} = & \, \frac12+\frac{40s^6+540s^5-765s^4+540s^3-270s^2+27}{6z(s+1)^2(8s^2-9s+3)}.
\end{align}
We wish to utilize the symmetry $z\mapsto -z$, $(q_{22},t_{22})\mapsto(1-q_{22},1-t_{22})$
while identifying $FG$ with $x(x-1)(x-t_{22})$.
For this purpose we find an elliptic surface that is defined over $\QQ(t(1-t))$
and has the same $j$-invariant as the Legendre family 
\mbox{$y^2=x(x-1)(x-t)$}. 
The following elliptic surface has these properties:
\begin{equation} \label{eq:altleg}
y^2=(x-t)\,(x-1+t)\,\big(x-2t(1-t)\big).
\end{equation}
Therefore we identify
\begin{align}
F= &\; (x-t_{22})\,(x-1+t_{22})=x^2-x+t_{22}\,(1-t_{22}), \\ 
G= &\; x-2\,t_{22}\,(1-t_{22}) \nonumber
\end{align}
initially. Here $t_{22}\,(1-t_{22})$ 
is not dependent on $z$: 
\begin{equation}
t_{22}\,(1-t_{22})=-\frac{16s^5(s-3)^5(5s-3)^2}{27(s+1)^4(5s+1)(8s^2-9s+3)^3}.
\end{equation}
Additionally, we transform 
\begin{equation} \label{eq:nextx}
x\mapsto \frac12
+\frac{40s^6+540s^5-765s^4+540s^3-270s^2+27}{54(s+1)^4(5s+1)(8s^2-9s+3)^3}\,x
\end{equation}
to get the simpler
\begin{align}
F= &\; x^2-27(s+1)^4(5s+1)(8s^2-9s+3)^3, \\ 
G= &\; x+40s^6+540s^5-765s^4+540s^3-270s^2+27. \nonumber
\end{align}
This parametrizes $d_1,d_2,e_1$.
An isomorphism from the Legendre curve to (\ref{eq:altleg})
is given by $x\mapsto (x-t)/(1-2t)$. 
The composition of this isomorphism (with $t=t_{22}$) 
and (\ref{eq:nextx}) is the transformation
\begin{equation}
x\mapsto Kx+\frac12, \quad \mbox{with} \quad
K=-\frac{z}{18(s+1)^2(5s+1)(8s^2-9s+3)^2}.
\end{equation}
After this whole transformation, the coefficient $W_1$ 
in (\ref{eq:ode4p1}) equals
\begin{equation*}
W_1=\frac{77x^2+\frac{8(30625s^{9}+\ldots-2673s+2673)}{3(7s^3-3s^2-s+1)}x
+\frac{(s+1)(8s^2-9s+3)(666400s^{12}+\ldots+136323)}{3(7s^3-3s^2-s+1)}}
{900(q-x)FG}
\end{equation*}
with
\begin{equation*}
q=-\frac{(s+1)(8s^2\!-\!9s\!+\!3)
(140s^6\!+\!1029s^5\!-\!1023s^4\!+\!360s^3\!-\!288s^2\!+\!27s\!+\!27)}
{3(7s^3-3s^2-s+1)}.
\end{equation*}
The identification $W_1=W_2$ leads to a polynomial of degree 6 in $x$
after clearing the denominators. Its 3 leading coefficients give straightforwardly
\begin{align*}
b_1=&\; \frac{2(8s^2-9s+3)^2(16s^4-8s^3+8s^2+15s+3)}{7s^3-3s^2-s+1}, \\
b_2=& -\frac{(s+1)^2(8s^2\!-\!9s\!+\!3)^3
(625s^6\!+\!1386s^5\!-\!567s^4\!+\!540s^3\!-\!27s^2\!-\!162s\!-\!27)}{7s^3-3s^2-s+1}, \\
a_3=& -2(8s^2-9s+3)^3
(192500s^{10}+300697s^9+68513s^8+41532s^7+297588s^6\\
&\hspace{60pt} -86778s^5+57510s^4+43740s^3-19440s^2-10935s-1215).
\end{align*}
The logarithmic derivative ansatz already gave expressions of the other coefficients
in terms of $a_3,b_1,b_2,d_1,d_2,e_1,q$. With all coefficients parametrized,
we find $r_0=13824(5s+1)(7s^3-3s^2-s+1)^5$.
\end{example}


\small

\end{document}